\titleformat*{\subsection}{\Large\bfseries}
\titleformat*{\subsubsection}{\large\bfseries}
\titleformat*{\paragraph}{\large\bfseries}
\titleformat*{\subparagraph}{\large\bfseries}
\renewcommand{\@seccntformat}[1]{\csname the#1\endcsname. }
\renewenvironment{abstract}{%
    \if@twocolumn
      \section*{\abstractname}%
    \else 
      \begin{center}%
        {\bfseries \Large\abstractname\vspace{\z@}}
      \end{center}%
      \quotation
    \fi}
    {\if@twocolumn\else\endquotation\fi}
\theoremstyle{plain}
\newtheorem{thm}{Theorem}[section]
\newtheorem{lem}[thm]{Lemma}
\theoremstyle{definition}
\newtheorem{defn}[thm]{Definition}
\newtheorem{qn}[thm]{Question}
\newtheorem{ex}[thm]{Example}
\providecommand{\keywords}[1]{{\bf{Keywords:}} #1}
\providecommand{\subjectclass}[2]{\textbf{Mathematics subject classification 2020:} #1}
\title{Dynamical IP$^{\star}$-sets in weak rings}
\date{\today}
\author{Pintu Debnath
\footnote{Department of Mathematics, Basirhat College, Basirhat -743412, North
24th parganas, West Bengal, India.
           {\bf pintumath1989@gmail.com}}
           \and
Sayan Goswami
\footnote{Department of Mathematics, 
          University of Kalyani, 
          Kalyani-741235,
          Nadia, West Bengal, India
          {\bf sayan92m@gmail.com}}
}
\date{\vspace{-5ex}}
\begin{document}
\maketitle

\begin{abstract}
\noindent V. Bergelson and N. Hindman in \cite{key-1}, proved
that IP$^{\star}$- sets contain all possible finite sums and products
of a sum subsystem of any sequence in $\mathbb{N}$. In a recent work
\cite{key-4}, the second author of this article has proved that a stronger result holds for dynamical IP$^{\star}$- sets. In this
article we will establish a non-commutative version of this result.
We will prove that a richer configuration is contained in dynamical
IP$^{\star}$- sets in weak rings.
\end{abstract}
\subjectclass{05D10}\\

\noindent \keywords{IP$^{\star}$-sets, Zigzag configuration, weak ring}

\section{Introduction}

Ramsey theoretic study is deeply involved with the study of topological
dynamics, ergodic theory and the algebra of the Stone-\v{C}ech compactification
of discrete semigroups. In \cite{key-4-1}, N. Hindman established
a result regarding partition regularity of finite sums of a sequence.
This theorem has motivated many mathematicians to study the IP-sets in Ramsey theory. For any set $X$, denote by $\mathcal{P}_{f}\left(X\right)$, the set of all nonempty finite subsets of $X$. Let
$\left(S,\cdot\right)$ be a semigroup and a set $A\subseteq S$ is
called an IP-set if and only if there exists a sequence $\langle x_{n}\rangle_{n=1}^{\infty}$
in $S$ such that $FP\left(\langle x_{n}\rangle_{n=1}^{\infty}\right)\subseteq A$.
The expression $FP\left(\langle x_{n}\rangle_{n=1}^{\infty}\right)$ denotes the set $\left\{ \prod_{n\in F}x_{n}:F\in\mathcal{P}_{f}\left(\mathbb{N}\right)\right\} $
and $\prod_{n\in F}x_{n}$ is defined to be the product in increasing order.
The following theorem is known as the Hindman's Theorem.
\begin{thm}
\rm{\cite{key-4-1}} Let $r\in\mathbb{N}$ and $\mathbb{N}=C_{1}\cup C_{2}\cup\ldots\cup C_{r}$
be a finite partition of $\mathbb{N}$. Then there exist $i\in\left\{ 1,2,\ldots,r\right\} $
and a sequence $\langle x_{n}\rangle_{n=1}^{\infty}$ such that $FS\left(\langle x_{n}\rangle_{n=1}^{\infty}\right)\subseteq C_{i}.$
\end{thm}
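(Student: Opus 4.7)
The plan is to invoke the algebra of the Stone--\v{C}ech compactification $\beta\mathbb{N}$ of the discrete semigroup $(\mathbb{N},+)$. Identifying $\beta\mathbb{N}$ with the set of ultrafilters on $\mathbb{N}$, ordinary addition extends to a binary operation under which $\beta\mathbb{N}$ becomes a compact right topological semigroup. Ellis's theorem then yields an idempotent $p\in\beta\mathbb{N}$, that is, an ultrafilter with $p+p=p$. Since $p$ is an ultrafilter and $\mathbb{N}=C_{1}\cup\cdots\cup C_{r}$, exactly one cell $A:=C_{i}$ lies in $p$, and the goal reduces to exhibiting a sequence $\langle x_{n}\rangle$ with $FS(\langle x_{n}\rangle)\subseteq A$.

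The engine of the argument is the following lemma: if $p$ is an idempotent in $(\beta\mathbb{N},+)$ and $A\in p$, then the set $A^{\star}:=\{n\in A:-n+A\in p\}$ lies in $p$, and moreover $-n+A^{\star}\in p$ for every $n\in A^{\star}$. I would derive this by unpacking the definition of the extended addition, $B\in p+p$ iff $\{n:-n+B\in p\}\in p$, and then applying the idempotency $p+p=p$ twice, once to show $A^{\star}\in p$ and once to show that the shift $-n+A^{\star}$ also belongs to $p$ whenever $n\in A^{\star}$.

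With the lemma in hand, the sequence $\langle x_{n}\rangle$ is built by induction. Pick any $x_{1}\in A^{\star}$. Assuming $x_{1},\ldots,x_{n}$ have been selected so that every element of $FS(\langle x_{i}\rangle_{i=1}^{n})$ lies in $A^{\star}$, the set
\[
A^{\star}\cap\bigcap_{s\in FS(\langle x_{i}\rangle_{i=1}^{n})}(-s+A^{\star})
\]
is a finite intersection of members of the ultrafilter $p$, hence belongs to $p$ and is in particular nonempty; choose $x_{n+1}$ from it. Then every new finite sum $s+x_{n+1}$ lies in $A^{\star}$, so $FS(\langle x_{i}\rangle_{i=1}^{n+1})\subseteq A^{\star}\subseteq A=C_{i}$, completing the recursion.

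The main obstacle is the lemma on $A^{\star}$: the manipulation of iterated ultrafilter limits has to be done carefully, and this is the place where the idempotency $p+p=p$ is genuinely exploited. Everything else is a formal consequence of the ultrafilter property and a finite induction.
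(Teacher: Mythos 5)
Your proposal is correct: it is the standard Galvin--Glazer argument via an idempotent ultrafilter in $(\beta\mathbb{N},+)$, with the key lemma that $A^{\star}=\{n\in A:-n+A\in p\}$ satisfies $A^{\star}\in p$ and $-n+A^{\star}\in p$ for $n\in A^{\star}$, followed by the finite-intersection recursion. The paper does not prove this theorem itself (it cites Hindman's original article), but your route is exactly the machinery the paper invokes elsewhere through \cite[Lemma 5.11 and Theorem 5.14]{key-5}, so there is nothing to object to.
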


One can show that the above theorem is equivalent to the statement
that if an IP-set is partitioned into finitely many cells, then one
of the cell is itself an IP-set. The above theorem is true for arbitrary
discrete semigroups.

An IP$^{\star}$-set is a subset of $S$ which intersects all possible
IP-sets of $S$. Given a sequence $\langle x_{n}\rangle_{n=1}^{\infty}$
in $S$, we say that $\langle y_{n}\rangle_{n=1}^{\infty}$ is a product
subsystem of $\langle x_{n}\rangle_{n=1}^{\infty}$ provided there
exists a sequence $\langle H_{n}\rangle_{n=1}^{\infty}$ of non-empty
finite subsets such that $\text{max}H_{n}<\text{min}H_{n+1}$ and $y_{n}=\prod_{t\in H_{n}}x_{t}$
for each $n\in\mathbb{N}$. Notice that if $\langle y_{n}\rangle_{n=1}^{\infty}$
is a product subsystem of $\langle x_{n}\rangle_{n=1}^{\infty}$,
then $FP\left(\langle y_{n}\rangle_{n=1}^{\infty}\right)\subseteq FP\left(\langle x_{n}\rangle_{n=1}^{\infty}\right)$.
In \cite{key-1}, the authors have proved that the IP$^{\star}$-sets
contain a rich combinatorial configuration by establishing the following
theorem.
\begin{thm}
Let $\langle x_{n}\rangle_{n=1}^{\infty}$ be a sequence in $\mathbb{N}$
and $A$ be an IP$^{\star}$-set in $\left(\mathbb{N},+\right)$.
Then there exists a sum subsystem $\langle y_{n}\rangle_{n=1}^{\infty}$
of $\langle x_{n}\rangle_{n=1}^{\infty}$ such that 
\[
FS\left(\langle y_{n}\rangle_{n=1}^{\infty}\right)\cup FP\left(\langle y_{n}\rangle_{n=1}^{\infty}\right)\subseteq A.
\]
\end{thm}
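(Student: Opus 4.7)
The plan is to work in the Stone-\v{C}ech compactification $\beta\mathbb{N}$, equipped with the extensions of $+$ and $\cdot$ making $(\beta\mathbb{N},+)$ and $(\beta\mathbb{N},\cdot)$ right-topological semigroups. The key input is the Galvin--Glazer description: a subset $A\subseteq\mathbb{N}$ is an IP-set in $(\mathbb{N},+)$ iff $A\in p$ for some $+$-idempotent $p\in\beta\mathbb{N}$, so $A$ is an IP$^{\star}$-set iff $A\in p$ for every $+$-idempotent $p$. The strategy is to locate a single ultrafilter that is simultaneously $+$- and $\cdot$-idempotent and that lies in the ``tail filter'' of $\langle x_{n}\rangle$, and then to run a Galvin--Glazer style recursion to extract $\langle y_{n}\rangle$.

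First I would introduce the compact set
\[
T=\bigcap_{m=1}^{\infty}\overline{FS(\langle x_{n}\rangle_{n=m}^{\infty})}\subseteq\beta\mathbb{N},
\]
and verify that $T$ is a closed subsemigroup of $(\beta\mathbb{N},+)$. Every $p\in T$ satisfies $FS(\langle x_{n}\rangle_{n\geq N})\in p$ for each $N$, which is exactly what will let us select successive index blocks $H_{n}$ with $\max H_{n}<\min H_{n+1}$.

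The heart of the argument, and the main obstacle, is producing an ultrafilter $p\in T$ that is simultaneously a $+$-idempotent and a $\cdot$-idempotent. My plan is to invoke the fact from the algebra of $\beta\mathbb{N}$ that $\overline{K(\beta\mathbb{N},\cdot)}$ is a two-sided ideal of $(\beta\mathbb{N},+)$; intersecting this ideal with $T$ gives a nonempty compact sub-$+$-semigroup of $T$, in which Ellis's theorem furnishes a $+$-idempotent $p$. The multiplicative minimal-ideal structure inside $\overline{K(\beta\mathbb{N},\cdot)}$ then allows $p$ to be arranged to be $\cdot$-idempotent as well, so that $p+p=p=p\cdot p$ while still $p\in T$. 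Coordinating the two idempotencies inside the $+$-subsemigroup $T$ is the technically delicate point, since $T$ is not a priori closed under $\cdot$.

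Once such $p$ is in hand, $A\in p$ because $A$ is IP$^{\star}$, and the sum subsystem is extracted recursively. Suppose $y_{1},\dots,y_{k}$ have already been produced as finite sums of $\langle x_{n}\rangle$ with disjoint increasing index blocks and with every element of $FS$ and $FP$ of $\{y_{1},\dots,y_{k}\}$ already in $A$. Writing $S_{F}=\sum_{i\in F}y_{i}$ and $P_{F}=\prod_{i\in F}y_{i}$ for $\emptyset\neq F\subseteq\{1,\dots,k\}$, form
\[
B_{k}=A\cap\bigcap_{F}(-S_{F}+A)\cap\bigcap_{F}\{n\in\mathbb{N}:nP_{F}\in A\}.
\]
An induction using $p+p=p$ and $p\cdot p=p$ shows $B_{k}\in p$; consequently $\{n:-n+B_{k}\in p\}$ and $\{n:n^{-1}B_{k}\in p\}$ both belong to $p$. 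Intersecting these with $B_{k}$ and with $FS(\langle x_{n}\rangle_{n>\max H_{k}})\in p$ gives a nonempty set, from which we pick $y_{k+1}=\sum_{i\in H_{k+1}}x_{i}$ with $\min H_{k+1}>\max H_{k}$. A direct verification that every new sum $y_{k+1}+S_{F}$ and every new product $y_{k+1}\cdot P_{F}$ lies in $A$ completes the induction, and the resulting sum subsystem $\langle y_{n}\rangle$ satisfies $FS(\langle y_{n}\rangle)\cup FP(\langle y_{n}\rangle)\subseteq A$.
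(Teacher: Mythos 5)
The paper itself quotes this theorem from Bergelson--Hindman without proof, so I am comparing your argument with the standard one. Your overall skeleton --- the tail semigroup $T=\bigcap_{m}\overline{FS(\langle x_{n}\rangle_{n=m}^{\infty})}$, Ellis's theorem, and a Galvin--Glazer recursion built around the set $B_{k}$ --- is the right shape, but the step on which everything rests is unavailable: there is \emph{no} ultrafilter $p\in\beta\mathbb{N}$ with $p+p=p=p\cdot p$ at all (indeed no $p\in\mathbb{N}^{*}$ with $p+p=p\cdot p$; see Chapter 13 of Hindman--Strauss, \emph{Algebra in the Stone--\v{C}ech compactification}), let alone one lying in $T$. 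The auxiliary fact you invoke is also not the standard one: what is known is that $\overline{K(\beta\mathbb{N},+)}$ is a left ideal of $(\beta\mathbb{N},\cdot)$, not that $\overline{K(\beta\mathbb{N},\cdot)}$ is a two-sided ideal of $(\beta\mathbb{N},+)$; and even granting some such ideal statement, Ellis's theorem applied inside $T\cap\overline{K(\beta\mathbb{N},\cdot)}$ would only produce an additive idempotent --- there is no mechanism by which it can be ``arranged'' to be multiplicatively idempotent as well. So the heart of your plan cannot be carried out or repaired in that form.

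The repair is that multiplicative idempotency of $p$ is never needed; this is precisely where distributivity in $\mathbb{N}$ enters. Take any additive idempotent $p\in T$. For each fixed $a\in\mathbb{N}$, the map $q\mapsto a\cdot q$ is a continuous homomorphism of $(\beta\mathbb{N},+)$ into itself (because $a(m+n)=am+an$), so $a\cdot p$ is again an additive idempotent; since $A$ is IP$^{\star}$ it belongs to \emph{every} additive idempotent, hence $A\in a\cdot p$, i.e.\ $a^{-1}A=\{n:an\in A\}\in p$. Applied with $a=P_{F}$, this is exactly what puts each set $\{n:P_{F}\,n\in A\}$ into $p$ and hence gives $B_{k}\in p$; the sets $\{n:n^{-1}B_{k}\in p\}$ in your construction are then unnecessary (and could not be shown to lie in $p$ anyway, since that argument leaned on $p\cdot p=p$). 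With that substitution, and the usual bookkeeping with starred sets to keep the recursion alive at every stage, your extraction of $\langle y_{n}\rangle$ goes through and recovers the Bergelson--Hindman proof.
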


In \cite[Theorem 6]{key-4}, the second author of this article, has proved that a
 richer configuration is contained in certain IP$^{\star}$-sets. We will define
the zigzag finite product configuration for $l$ many sequences as
follows:
\begin{defn}
\cite{key-4}
\begin{enumerate}
\item For any $l\in\mathbb{N}$ and any $l$-sequences $\langle x_{1,n}\rangle_{n=1}^{\infty},$ $\langle x_{2,n}\rangle_{n=1}^{\infty}$
$,\ldots$ $,\langle x_{l,n}\rangle_{n=1}^{\infty}$ in $S$, define
the zigzag finite product
\[
ZFP\left(\langle\langle x_{i,n}\rangle_{n=1}^{\infty}\rangle_{i=1}^{l}\right)=\left\{ \begin{array}{c}
\prod_{t\in H}y_{t}:H\in\mathcal{P}_{f}\left(\mathbb{N}\right)\,\text{and}\\
y_{i}\in\left\{ x_{1,i},x_{2,i},\ldots,x_{l,i}\right\} \,\text{for}\,\text{any}\,i\in\mathbb{N}
\end{array}\right\} 
\]
\item For any $l,m\in\mathbb{N}$ and any $l$-sequences $\langle x_{1,n}\rangle_{n=1}^{\infty},$ $\langle x_{2,n}\rangle_{n=1}^{\infty}$
$,\ldots$ $,\langle x_{l,n}\rangle_{n=1}^{\infty}$ in $S$, define
\[
ZFP\left(\langle\langle x_{i,n}\rangle_{n=1}^{m}\rangle_{i=1}^{l}\right)=\left\{ \begin{array}{c}
\prod_{t\in H}y_{t}:H\subseteq\left\{ 1,2,\ldots,m\right\} \,\text{and}\,\\
y_{i}\in\left\{ x_{1,i},x_{2,i},\ldots,x_{l,i}\right\} \,\text{for}\,\text{any}\,i\in\mathbb{N}
\end{array}\right\} 
\]
\end{enumerate}
\end{defn}

Clearly for $l=1$, this is nothing but the ordinary finite product of
a sequence. In \cite{key-4}, the second author of this article asked
the following question.
\begin{qn}
Let $l\in\mathbb{N}$ and $A\subseteq\mathbb{N}$ be an IP$^{\star}$-
set in $\left(\mathbb{N},+\right)$. Then for any $l$ sequences  $\langle x_{1,n}\rangle_{n=1}^{\infty},$ $\langle x_{2,n}\rangle_{n=1}^{\infty}$ $,\ldots$ $,\langle x_{l,n}\rangle_{n=1}^{\infty}$
in $\mathbb{N}$ whether there exists a $l$ sum subsystems $\langle y_{i,n}\rangle_{n=1}^{\infty},$ of
$\langle x_{i,n}\rangle_{n=1}^{\infty},$ for each $i\in\left\{ 1,2,\ldots,l\right\} $
such that 
\[
ZFP\left(\langle\langle y_{i,n}\rangle_{n=1}^{\infty}\rangle_{i=1}^{l}\right)\bigcup ZFP\left(\langle\langle y_{i,n}\rangle_{n=1}^{\infty}\rangle_{i=1}^{l}\right) \subset A.
\]
\end{qn}

We don't know the answer of the above question. In \cite[Theorem 6]{key-4},
it has been established that certain IP$^{\star}$-sets, arise from the
recurrence of Topological dynamics, contain the above structure. 
To recall dynamical IP$^{\star}$-sets, we need the following definitions.
\begin{defn}
\cite[Definition 19.29, page 503]{key-5}
\begin{enumerate}
\item A measure space is a triple $\left(X,\mathcal{B},\mu\right)$ where
$X$ is a set, $\mathcal{B}$ is a $\sigma$-algebra of subsets of
$X$, and is a countably additive measure on $\mathcal{B}$ with $\mu\left(X\right)$
finite.
\item Given a measure space $\left(X,\mathcal{B},\mu\right)$ a function
$T:X\rightarrow X$ is a measure preserving transformation if and
only if for all $B\in\mathcal{B}$, $T^{-1}\left[B\right]\in\mathcal{B}$
and $\mu\left(T^{-1}\left[B\right]\right)=\mu\left(B\right)$.
\item Given a semigroup $S$ and a measure space $\left(X,\mathcal{B},\mu\right)$,
a measure preserving action of $S$ on $X$ is an indexed family $\langle T_{s}\rangle_{s\in S}$
such that each $T_{s}$ is a measure preserving transformation of
$X$ and $T_{s}\circ T_{t}=T_{st}$ . It is also required that if
$S$ has an identity $e$, then $T_{e}$ is the identity function
on $X$.
\item A measure preserving system is a quadruple $\left(X,\mathcal{B},\mu,\langle T_{s}\rangle_{s\in S}\right)$
such that $\left(X,\mathcal{B},\mu\right)$ is a measure space and
$\langle T_{s}\rangle_{s\in S}$ is a measure preserving action of
$S$ on $X$.
\end{enumerate}
\end{defn}

From \cite[Theorem 19.33, page 504]{key-5}, we get the following
theorem:
\begin{thm}
Let $\left(X,\mathcal{B},\mu,\langle T_{s}\rangle_{s\in S}\right)$
be a measure preserving system. Then for any $A\in\mathcal{B}$ with
$\mu\left(A\right)>0$, $C=\left\{ s\in S:\mu\left(A\cap T_{s}^{-1}[A]\right)>0\right\} $
an IP$^{\star}$- set.
\end{thm}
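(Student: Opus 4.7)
The plan is to invoke the algebraic structure of the Stone-\v{C}ech compactification $\beta S$. Recall that $A \subseteq S$ is an IP-set if and only if $A \in p$ for some idempotent $p$ in $(\beta S, \cdot)$; equivalently, $C$ is an IP$^\star$-set if and only if $C$ belongs to every idempotent $p \in \beta S$. So I would fix an arbitrary idempotent $p = p \cdot p$ and aim to show $C \in p$.

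The next step is to pass to the Koopman representation on the Hilbert space $\mathcal{H} = L^2(X,\mu)$, setting $U_s f := f \circ T_s$; each $U_s$ is an isometry, and the hypothesis $T_s \circ T_t = T_{st}$ yields the operator identity $U_s U_t = U_{ts}$. Since the orbit of any $f \in \mathcal{H}$ is norm-bounded and the closed unit ball of $\mathcal{H}$ is weakly compact, the weak-operator $p$-limit
\[
Pf := p\text{-}\lim_{s \in S} U_s f
\]
defines a bounded operator $P$ on $\mathcal{H}$. Using $\mu(A \cap T_s^{-1}[A]) = \langle \chi_A, U_s \chi_A \rangle$, we obtain
\[
p\text{-}\lim_{s \in S} \mu\bigl(A \cap T_s^{-1}[A]\bigr) = \langle \chi_A,\, P \chi_A \rangle.
\]

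The crucial calculation is to recognize $P$ as an orthogonal projection by exploiting $p \cdot p = p$. Using continuity of one-sided multiplication in $\beta S$, one rearranges $P^2 f = p\text{-}\lim_s p\text{-}\lim_t U_{ts} f$ into a single iterated limit that collapses to $Pf$ by idempotency, and a symmetric manipulation gives $P^* = P$. Granting these, $\langle \chi_A, P\chi_A\rangle = \|P\chi_A\|^2$.

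It remains to show this quantity is strictly positive. Since each $T_s$ preserves $\mu$, we have $\int U_s \chi_A\, d\mu = \mu(A)$ for all $s$; taking $p$-limits forces $\int P\chi_A\, d\mu = \mu(A) > 0$, so $P\chi_A$ is a nonzero element of $\mathcal{H}$ and $\|P\chi_A\|^2 > 0$. Thus the non-negative function $s \mapsto \mu(A \cap T_s^{-1}[A])$ has strictly positive $p$-limit, which forces its support $C$ to lie in $p$, as required. The main obstacle is the bookkeeping that converts the iterated $p$-limit $p\text{-}\lim_s p\text{-}\lim_t U_{ts}$ into a single limit witnessing idempotence; this is where one must carefully match the semigroup convention on $S$, the composition law for the $U_s$, and the chosen convention for multiplication in $\beta S$.
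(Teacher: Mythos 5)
The paper itself offers no proof of this statement: it is imported verbatim from \cite[Theorem 19.33]{key-5}, so there is no internal argument to compare you against. Your outline is the standard Hilbert-space/idempotent-ultrafilter proof that Hindman and Strauss use for this result, and its skeleton is sound: reduce to showing $C\in p$ for every idempotent $p$, form the weak $p$-limit $P$ of the Koopman operators, identify $p\text{-}\lim_{s}\mu\left(A\cap T_{s}^{-1}[A]\right)$ with $\langle\chi_{A},P\chi_{A}\rangle$, and force positivity via $\langle\mathbf{1},P\chi_{A}\rangle=\mu\left(A\right)>0$. The characterization of IP$^{\star}$-sets as members of every idempotent, the identity $\mu\left(A\cap T_{s}^{-1}[A]\right)=\langle\chi_{A},U_{s}\chi_{A}\rangle$, and the final passage from a positive $p$-limit to $C\in p$ are all correct.

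Two steps, however, are asserted rather than proved, and one of them is the mathematical heart of the theorem. First, ``a symmetric manipulation gives $P^{*}=P$'' does not work: for a non-invertible measure-preserving $T_{s}$ the adjoint $U_{s}^{*}$ is not of the form $U_{t}$, so there is no symmetric limit to manipulate. The correct route is to note $\|P\|\leq1$ (weak lower semicontinuity of the norm) together with $P^{2}=P$ and invoke the standard fact that an idempotent contraction on a Hilbert space is automatically the orthogonal projection onto its range; self-adjointness is then free. Second, the collapse of $p\text{-}\lim_{s}p\text{-}\lim_{t}U_{ts}f$ to $Pf$, which you explicitly defer as ``bookkeeping,'' is not bookkeeping. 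Because $U_{s}U_{t}=U_{ts}$ is an anti-homomorphism, the iterated limit computes $\left(p\diamond p\right)\text{-}\lim_{v}U_{v}f$, where $\diamond$ is the extension to $\beta S$ of the \emph{opposite} operation on $S$, whereas IP-sets are characterized by idempotents of the extension of the given operation; $p\cdot p=p$ does not by itself give $p\diamond p=p$, since the two extensions genuinely differ. One must either verify idempotency for the relevant operation or set the representation up so that $s\mapsto U_{s}$ is an honest homomorphism; this is precisely the content of \cite[Lemma 19.32]{key-5}. So: right strategy, true theorem, but the proof as written leaves its central step unestablished.
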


The following one is the definition of dynamical IP$^{\star}$-set. 

\begin{defn} \label{dynip} \cite[Definition 19.34, page 505]{key-5}
Let $S$ be a semigroup. A subset $C$ of $S$ is dynamical IP$^{\star}$-
set if and only if there exist a measure preserving system $\left(X,\mathcal{B},\mu,\langle T_{s}\rangle_{s\in S}\right)$
and an $A\in\mathscr{\mathcal{B}}$ with $\mu\left(A\right)>0$ such
that $\left\{ s\in S:\mu\left(A\cap T_{s}^{-1}[A]\right)>0\right\} \subseteq C$.
\end{defn}

In this article we will extend \cite[Theorem 6]{key-4} in non-commutative
setting.

\section{Dynamical IP$^{\star}$-sets in weak rings}

First we need to extend \cite[Theorem 19.35, page 505]{key-5} and
\cite[Lemma 7]{key-4} for arbitrary discrete semigroups in our purpose. The
proof of the following lemma is similar to the proof of \cite[Theorem 19.35]{key-5}.

For any semigroup $\left(S,\cdot\right)$, $s,t\in S$ and $A\subseteq S$, define
$s^{-1}A=\left\{ t\in S:st\in A\right\} $, $As^{-1}=\left\{ t\in S:ts\in A\right\} $
and $s^{-1}At^{-1}=\left\{ x\in S:sxt\in A\right\} $.
\begin{lem}
\label{Lemma 1} Let B be a dynamical IP$^{\star}$-set in $(S,\cdot)$.
 Then it follows that there is a dynamical IP$^{\star}$-set $C\subset B$ such that for
each $t\in C$, $t^{-1}C$ is a dynamical IP$^{\star}$-set
\end{lem}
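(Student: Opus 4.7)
The plan is to imitate the standard argument for commutative semigroups (as in the cited Theorem 19.35 of Hindman--Strauss) while carefully tracking the order of composition of the transformations. First, invoke Definition \ref{dynip} to produce a measure preserving system $(X,\mathcal{B},\mu,\langle T_s\rangle_{s\in S})$ and a set $A\in\mathcal{B}$ with $\mu(A)>0$ such that $\{s\in S:\mu(A\cap T_s^{-1}[A])>0\}\subseteq B$. I would then define
\[
C:=\{s\in S:\mu(A\cap T_s^{-1}[A])>0\}.
\]
By construction $C\subseteq B$, and the same witnessing data $(X,\mathcal{B},\mu,\langle T_s\rangle,A)$ shows that $C$ is itself a dynamical IP$^{\star}$-set.

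Next, fix $t\in C$ and set $A_t:=A\cap T_t^{-1}[A]$, which has positive measure precisely because $t\in C$. I claim that the same system, now paired with the set $A_t$, witnesses that $t^{-1}C$ is a dynamical IP$^{\star}$-set. The key observation uses $T_t\circ T_s=T_{ts}$, which yields $T_{ts}^{-1}[A]=T_s^{-1}[T_t^{-1}[A]]$; therefore, for every $s\in S$,
\[
A_t\cap T_s^{-1}[A_t]\subseteq A\cap T_s^{-1}\bigl[T_t^{-1}[A]\bigr]=A\cap T_{ts}^{-1}[A].
\]
Consequently, $\mu(A_t\cap T_s^{-1}[A_t])>0$ forces $\mu(A\cap T_{ts}^{-1}[A])>0$, i.e.\ $ts\in C$, i.e.\ $s\in t^{-1}C$. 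This proves the inclusion $\{s\in S:\mu(A_t\cap T_s^{-1}[A_t])>0\}\subseteq t^{-1}C$, so $t^{-1}C$ is a dynamical IP$^{\star}$-set, as required.

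The only genuinely delicate point is the non-commutative bookkeeping: under the convention $T_s\circ T_t=T_{st}$, one has $T_{ts}^{-1}=T_s^{-1}\circ T_t^{-1}$ rather than $T_t^{-1}\circ T_s^{-1}$, and this is exactly what forces the pulled-back set to correspond to the \emph{left} translate $t^{-1}C$ rather than the right translate $Ct^{-1}$. Once this order is pinned down, every other step is a mechanical transcription of the commutative proof, so I do not anticipate any further obstacle.
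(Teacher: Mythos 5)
Your proof is correct and follows essentially the same route as the paper: the same choice of $C=\{s\in S:\mu(A\cap T_s^{-1}[A])>0\}$, the same witnessing set $A\cap T_t^{-1}[A]$, and the same computation $A\cap T_s^{-1}[T_t^{-1}[A]]=A\cap T_{ts}^{-1}[A]$ giving $ts\in C$, hence $s\in t^{-1}C$. The non-commutative bookkeeping you highlight is handled exactly as in the paper.
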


\begin{proof}
Let us consider a probability space $\left(X,\mathcal{B},\mu\right)$
with a measure preserving action $\left(T_{s}\right)_{s\in S}$ of
$(S,\cdot)$ on $X$, a set $A\in\mathcal{B}$ such that $\mu\left(A\right)>0$
and
$\left\{ s\in S:\mu\left(A\cap T_{s}^{-1}A\right)>0\right\} \subseteq B$.
Let 
\[
C=\left\{ s\in S:\mu\left(A\cap T_{s}^{-1}A\right)>0\right\} .
\]
To see that $C$ is as required, let $t\in C$ and let $D=A\cap T_{t}^{-1}A$.
We claim that
\[
\left\{ s\in S:\mu\left(D\cap T_{s}^{-1}D\right)>0\right\} \subseteq t^{-1}C.
\]
Let $s\in S$ such that $\mu\left(D\cap T_{s}^{-1}D\right)>0$. Then
\[
D\cap T_{s}^{-1}D=A\cap T_{t}^{-1}A\cap T_{s}^{-1}\left(A\cap T_{t}^{-1}A\right)
\]
\[
\quad \quad \quad \subseteq A\cap T_{s}^{-1}\left(T_{t}^{-1}A\right)
\]

\[
=A\cap T_{ts}^{-1}A
\]

so $ts\in C$ and $s\in t^{-1}C.$
\end{proof}
The following lemma says that the intersection of finite numbers of dynamical
IP$^{\star}$-sets is dynamical IP$^{\star}$-set.
\begin{lem}
\label{lemma 2} Let $\left(S,\cdot\right)$ be a semigroup. Let $A,B\subseteq S$
be two dynamical IP$^{\star}$-sets in $\left(S,\cdot\right)$. Then it follows that
$A\cap B$ is also a dynamical IP$^{\star}$-set in $\left(S,\cdot\right)$.
\end{lem}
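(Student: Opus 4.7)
The natural approach is to take a product of the two witnessing measure preserving systems. Since $A$ and $B$ are dynamical IP$^{\star}$-sets, by Definition~\ref{dynip} we can choose measure preserving systems $\left(X_{1},\mathcal{B}_{1},\mu_{1},\langle T_{s}^{(1)}\rangle_{s\in S}\right)$ and $\left(X_{2},\mathcal{B}_{2},\mu_{2},\langle T_{s}^{(2)}\rangle_{s\in S}\right)$ together with sets $A_{1}\in\mathcal{B}_{1}$, $A_{2}\in\mathcal{B}_{2}$ of positive measure such that $\{s\in S:\mu_{i}(A_{i}\cap(T_{s}^{(i)})^{-1}A_{i})>0\}$ is contained in $A$ when $i=1$ and in $B$ when $i=2$.

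The plan is to form the product measure space $\left(X_{1}\times X_{2},\mathcal{B}_{1}\otimes\mathcal{B}_{2},\mu_{1}\times\mu_{2}\right)$ and define the diagonal action $T_{s}(x_{1},x_{2})=(T_{s}^{(1)}x_{1},T_{s}^{(2)}x_{2})$. First I would verify that each $T_{s}$ is measure preserving on the product (which is immediate from the product measure structure applied to measurable rectangles and extended to $\mathcal{B}_{1}\otimes\mathcal{B}_{2}$) and that $T_{s}\circ T_{t}=T_{st}$, so that $\langle T_{s}\rangle_{s\in S}$ is a measure preserving action of $S$ on $X_{1}\times X_{2}$. Then I would set $E=A_{1}\times A_{2}$, which has $\mu_{1}\times\mu_{2}(E)=\mu_{1}(A_{1})\mu_{2}(A_{2})>0$.

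The key calculation is the identity
\[
E\cap T_{s}^{-1}E=\bigl(A_{1}\cap(T_{s}^{(1)})^{-1}A_{1}\bigr)\times\bigl(A_{2}\cap(T_{s}^{(2)})^{-1}A_{2}\bigr),
\]
which follows immediately from $T_{s}^{-1}(A_{1}\times A_{2})=(T_{s}^{(1)})^{-1}A_{1}\times(T_{s}^{(2)})^{-1}A_{2}$. Taking product measures, $\mu_{1}\times\mu_{2}(E\cap T_{s}^{-1}E)$ is positive if and only if both factors have positive measure, i.e. if and only if $s$ lies in the intersection of the two sets furnished by the hypothesis. Hence
\[
\bigl\{s\in S:\mu_{1}\times\mu_{2}(E\cap T_{s}^{-1}E)>0\bigr\}\subseteq A\cap B,
\]
which shows $A\cap B$ is a dynamical IP$^{\star}$-set.

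There is no real obstacle here; the only point to be careful about is the verification that the diagonal action is well defined and measure preserving on the product $\sigma$-algebra, which is routine. A small stylistic subtlety is that one must handle the identity element of $S$ (if any) to confirm $T_{e}$ is the identity on $X_{1}\times X_{2}$, but this is automatic from the analogous conditions on the two factor systems.
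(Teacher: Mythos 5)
Your proposal is correct and follows essentially the same route as the paper: both form the product system $\left(X_{1}\times X_{2},\mathcal{B}_{1}\otimes\mathcal{B}_{2},\mu_{1}\times\mu_{2},\langle T_{s}^{(1)}\times T_{s}^{(2)}\rangle_{s\in S}\right)$, take $E=A_{1}\times A_{2}$, and use the identity $\mu_{1}\times\mu_{2}\left(E\cap T_{s}^{-1}E\right)=\mu_{1}\left(A_{1}\cap(T_{s}^{(1)})^{-1}A_{1}\right)\cdot\mu_{2}\left(A_{2}\cap(T_{s}^{(2)})^{-1}A_{2}\right)$ to conclude. No differences worth noting.
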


\begin{proof}
Let $\left(X,\mathcal{B},\mu,\langle T_{s}\rangle_{s\in S}\right)$
and $\left(Y,\mathcal{C},\nu,\langle R_{s}\rangle_{s\in S}\right)$
be two measure preserving systems and $C,D$ be two sets guaranteed
by the definition \ref{dynip}.
 Now we have $\left\{ s\in S:\mu\left(C\cap T_{s}^{-1}[C]\right)>0\right\} $ $\subseteq A$
and $\left\{ s\in S:\nu\left(D\cap R_{s}^{-1}[D]\right)>0\right\} \subseteq B$.
 Let $\left(X\times Y,\mathcal{D},\mu \otimes \nu,\langle T_{s}\times R_{s}\rangle_{s\in S}\right)$
be a dynamical system where $\mathcal{D}$ is the smallest $\sigma$-algebra of subsets of $X\times Y$  which contains $\left\{ B\times C:B\in\mathcal{B},C\in\mathcal{C}\right\}, $
 $\mu \otimes \nu$  is the countably additive measure on $\mathcal{D}$ such that $(\mu \otimes \nu)(\mathcal{B}\times\mathcal{C})=\mu (B)\cdot \nu (C)$ for each $B\in \mathcal{B}$ and $C\in \mathcal{C}$, and $(T_s \times R_s)(x,y)=\left(T_s(x),R_s(y)\right)$. 
So, 
\[
\left(\mu\otimes\nu\right)\left(\left(C\times D\right)\cap\left(T_{s}\times R_{s}\right)^{-1}[C\times D]\right)=\mu\left(C\cap T_{s}^{-1}[C]\right)\cdot\nu\left(D\cap R_{s}^{-1}[D]\right).
\]

This implies $s\in\left\{ \left(\mu\otimes\nu\right)\left(\left(C\times D\right)\cap\left(T_{s}\times R_{s}\right)^{-1}[C\times D]\right)>0\right\} $
iff $s\in\left\{ s:\mu\left(C\cap T_{s}^{-1}[C]\right)>0\right\} \cap\left\{ s:\nu\left(D\cap R_{s}^{-1}[D]\right)>0\right\} $
and so $A\cap B$ is dynamical IP$^{\star}$-set.
\end{proof}
Let us recall the definition of weak ring.
\begin{defn}
\cite[Definition 16.33, page 419]{key-5}
\begin{enumerate}
\item A left weak ring is a triple $\left(S,+,\cdot\right)$ such that $\left(S,+\right)$
and $\left(S,.\right)$ are semigroups and the left distributive law
holds. That is, for all $x,y,z\in S$ one has$x\cdot\left(y+z\right)=x\cdot y+x\cdot z$.
\item A right weak ring is a triple $\left(S,+,\cdot\right)$ such that
$\left(S,+\right)$ and $\left(S,.\right)$ are semigroups and the
right distributive law holds. That is, for all $x,y,z\in S$ one has$\left(x+y\right)\cdot z=x\cdot z+y\cdot z$.
\item A weak ring is a triple $\left(S,+,\cdot\right)$ which is both a
left weak ring and a right weak ring.
\end{enumerate}
\end{defn}

Recall that in $FP\left(\langle x_{n}\rangle_{n=1}^{\infty}\right)$
the products are taken in increasing order of indices. The following
definition of `all product' of a sequence is taken from \cite[Definition 16.36, page 420]{key-5}.
\begin{defn}
Let $\left(S,\cdot\right)$ be a semigroup, let $\langle x_{n}\rangle_{n=1}^{\infty}$
be a sequence in $S$, and let $k\in\mathbb{N}$. Then $AP\left(\langle x_{n}\rangle_{n=1}^{k}\right)$
is the set of all products of terms of $\langle x_{n}\rangle_{n=1}^{k}$in
any order with no repetitions. Similarly $AP\left(\langle x_{n}\rangle\right)_{n=1}^{\infty}$
is the set of all products of terms of $\langle x_{n}\rangle_{n=1}^{\infty}$
in any order with no repetitions.
\end{defn}

For example, for $k=3$, we obtain the following:

\[
AP\left(\langle x_{n}\rangle_{n=1}^{3}\right)=\{x_{1},x_{2},x_{3},x_{1}x_{2},x_{2}x_{1},x_{1}x_{3},x_3x_1,x_{2}x_{3},x_{3}x_{2},
\]
\[
\qquad\qquad\qquad\qquad\quad x_{1}x_{2}x_{3},x_{1}x_{3}x_{2},x_{2}x_{1}x_{3},x_{2}x_{3}x_{1},x_{3}x_{1}x_{2},x_{3}x_{2}x_{1}\}.
\]

From \cite[Theorem 16.38, page 421]{key-5}, we have the following
theorem for IP$^{\star}$-sets. 
\begin{thm}
Let be $\left(S,+,\cdot\right)$ be a weak ring, let $A$ be an IP$^{\star}$
set in $\left(S,+\right)$, and let $\langle x_{n}\rangle_{n=1}^{\infty}$
be any sequence in $S$. Then there exists a sum subsystem $\langle y_{n}\rangle_{n=1}^{\infty}$
of $\langle x_{n}\rangle_{n=1}^{\infty}$ in $S$ such that \textup{$FS(\langle x_{n}\rangle_{n=1}^{\infty})\cup AP(\langle x_{n}\rangle_{n=1}^{\infty})\subseteq A$.}
\end{thm}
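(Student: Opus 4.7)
The plan is to work in the Stone--\v{C}ech compactification $(\beta S,+)$ using Galvin--Glazer style idempotent ultrafilter methods, with the weak ring structure entering through a single algebraic lemma. Form the closed subsemigroup $E=\bigcap_{n=1}^{\infty}\overline{FS(\langle x_k\rangle_{k=n}^{\infty})}$ of $(\beta S,+)$; by the Ellis--Numakura lemma it contains an idempotent $p$, and by construction every tail $FS(\langle x_k\rangle_{k=n}^{\infty})$ lies in $p$. Since $A$ is IP$^{\star}$ in $(S,+)$, every idempotent of $(\beta S,+)$ contains $A$, so in particular $A\in p$.

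The key weak ring ingredient is: for any $u,v\in S$, the set $u^{-1}Av^{-1}$ is IP$^{\star}$ in $(S,+)$, hence $u^{-1}Av^{-1}\in p$. Indeed, given any sequence $\langle z_n\rangle$ in $S$, induction on $|F|$ using both distributive laws gives $u\cdot\bigl(\sum_{i\in F}z_i\bigr)\cdot v=\sum_{i\in F}u\,z_i\,v$, so $FS(\langle u\,z_n\,v\rangle_{n=1}^{\infty})$ is itself an IP-set and meets $A$ by hypothesis; unwinding produces an element of $FS(\langle z_n\rangle)\cap u^{-1}Av^{-1}$.

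I build $\langle y_n\rangle$ together with $H_n\in\mathcal{P}_{f}(\mathbb{N})$, $y_n=\sum_{t\in H_n}x_t$ and $\max H_n<\min H_{n+1}$, inductively on $n$, maintaining after stage $n$ the invariants (i) $FS(\langle y_k\rangle_{k=1}^{n})\cup AP(\langle y_k\rangle_{k=1}^{n})\subseteq A$ and (ii) $-s+A\in p$ for every $s\in FS(\langle y_k\rangle_{k=1}^{n})\cup\{0\}$. At stage $n+1$ set $N=\max H_n+1$ and put
$$B_n=\Bigl(\bigcap_{s}(-s+A)^{\star}\Bigr)\cap\Bigl(\bigcap_{(u,v)}u^{-1}Av^{-1}\Bigr)\cap FS(\langle x_k\rangle_{k=N}^{\infty}),$$
where $s$ ranges over $FS(\langle y_k\rangle_{k=1}^{n})\cup\{0\}$, $(u,v)$ ranges over all pairs of (possibly empty) products, in some order, of two disjoint subsets of $\{y_1,\dots,y_n\}$, and for $C\in p$ we set $C^{\star}=\{t\in C:-t+C\in p\}$. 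Each $(-s+A)^{\star}$ is in $p$ by the usual refinement lemma applied to $-s+A\in p$; each $u^{-1}Av^{-1}$ is in $p$ by the weak ring observation; and $FS(\langle x_k\rangle_{k=N}^{\infty})\in p$ by choice of $p$. Hence $B_n\in p$ and in particular $B_n\neq\emptyset$. Picking $y_{n+1}\in B_n$ then yields $H_{n+1}$ with $\min H_{n+1}\geq N$, completes the $FS$ and $AP$ configurations inside $A$, and since $y_{n+1}\in(-s+A)^{\star}$ gives $-(s+y_{n+1})+A=-y_{n+1}+(-s+A)\in p$, preserving invariant (ii).

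The main obstacle is that an additive translate $-s+A$ of an IP$^{\star}$ set need not be IP$^{\star}$, so one cannot handle the $FS$ constraints by a naive finite intersection of IP$^{\star}$ sets. The resolution is to fix the idempotent $p\in E$ once and for all and carry ``$-s+A\in p$'' as an inductive invariant; by contrast, the multiplicative shifts $u^{-1}Av^{-1}$ are IP$^{\star}$ for free thanks to the distributive laws, which is precisely where the weak ring hypothesis is used.
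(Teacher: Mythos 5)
Your proposal is correct and follows essentially the same route as the source of this theorem: the paper does not prove the statement but quotes it from Hindman and Strauss \cite[Theorem 16.38]{key-5}, whose proof likewise takes an idempotent $p$ in $\bigcap_{m}\overline{FS\left(\langle x_{n}\rangle_{n=m}^{\infty}\right)}$, handles the additive constraints with the star refinement $C^{\star}=\{t\in C:-t+C\in p\}$, and handles the multiplicative ones via the lemma that $u^{-1}Av^{-1}$ (and its one-sided versions) is IP$^{\star}$ because the two distributive laws give $u\left(\sum_{i\in F}z_{i}\right)v=\sum_{i\in F}uz_{i}v$. I see no gaps in your argument.
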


In this article our aim is to prove a zigzag version
of the above theorem for dynamical IP$^{\star}$-sets.
We are thankful to the referee for improvement of the following definition from the previous draft.
\begin{defn}
Let $(S,+,\cdot)$ be a weak ring, let $l\in\mathbb{N}$, and let
$\langle x_{1,n}\rangle_{n=1}^{\infty}$, $\langle x_{2,n}\rangle_{n=1}^{\infty}$,
$\ldots$, $\langle x_{l,n}\rangle_{n=1}^{\infty}$, be any $l$ sequences
in $S$.
\begin{enumerate}
\item $ZAP\left(\langle\langle x_{i,n}\rangle_{n=1}^{\infty}\rangle_{i=1}^{l}\right)=\Big\lbrace\prod_{t=1}^{r}x_{\gamma\left(t\right),\tau\left(t\right)}:r\in\mathbb{N},\tau:\left\{ 1,2,\ldots,r\right\} \overset{1-1}{\longrightarrow}\mathbb{N},\,\text{and}\,\gamma:\left\{ 1,2,\ldots,r\right\} \rightarrow\left\{ 1,2,\ldots,l\right\} \Big\rbrace.$
\item For $k\in\mathbb{N}$, $ZAP\left(\langle\langle x_{i,n}\rangle_{n=1}^{k}\rangle_{i=1}^{l}\right)=\Big\lbrace\prod_{t=1}^{r}x_{\gamma\left(t\right),\tau\left(t\right)}:r\in\left\{ 1,2,\ldots,k\right\} ,\tau:\left\{ 1,2,\ldots,r\right\} \overset{1-1}{\longrightarrow}\left\{ 1,2,\ldots,k\right\} \,\text{and}\,\gamma:\left\{ 1,2,\ldots,r\right\} \rightarrow\left\{ 1,2,\ldots,l\right\} \Big\rbrace.$
\item $ZFS\left(\langle\langle x_{i,n}\rangle_{n=1}^{\infty}\rangle_{i=1}^{l}\right)=\!\!\Big\lbrace\!\sum x_{\gamma\left(t\right),t}:\!F\in\mathcal{P}_{f}\left(\mathbb{N}\right)\,\text{and}\,\gamma:F\rightarrow\left\{ 1,2,\ldots,l\right\} \Big\rbrace.$
\item For $k\in\mathbb{N},$ $ZFS\left(\langle\langle x_{i,n}\rangle_{n=1}^{k}\rangle_{i=1}^{l}\right)=\Big\lbrace\sum_{t\in F}x_{\gamma\left(t\right),t}:\emptyset\neq F\subseteq\left\{ 1,2,\ldots,k\right\}\linebreak  \,\text{and}  \,\gamma:F\rightarrow\left\{ 1,2,\ldots,l\right\} \Big\rbrace.$
\end{enumerate}
\end{defn}

Let us demonstrate the above definition by a simple example.
\begin{ex}
 Suppose we have $k,l=2$.
The following is the zigzag all product configuration:

$ZAP\left(\langle\langle x_{i,n}\rangle_{n=1}^{2}\rangle_{i=1}^{2}\right)=\Big\lbrace x_{1,1},x_{2,1},x_{1,2},x_{2,2},x_{1,1}x_{1,2},x_{1,2}x_{1,1},x_{1,1}x_{2,2},$

\hspace{1.55in} $ x_{2,2}x_{1,1}, x_{2,1}x_{1,2},x_{1,2}x_{2,1},x_{2,1}x_{2,2},x_{2,2}x_{2,1} \Big\rbrace $

\end{ex}

We need the following lemma.

\begin{lem}
Let $\left(S,+,\cdot\right)$ be a weak ring and let $A$ be a dynamical
$IP^{\star}$-set in $\left(S,+\right)$. Then for all $s,t\in S,$ $s^{-1}A,$ $As^{-1}$ and $s^{-1}At^{-1}$ are dynamical $IP^{\star}$-sets in $\left(S,+\right)$. 

\end{lem}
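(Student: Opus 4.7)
The plan is to leverage the distributive laws of the weak ring to manufacture three new measure preserving actions of $(S,+)$ out of the one witnessing that $A$ is dynamical $IP^{\star}$, and then read off the conclusion directly from Definition \ref{dynip}.

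First I would fix a measure preserving system $(X,\mathcal{B},\mu,\langle T_r\rangle_{r\in S})$ for $(S,+)$ together with a set $C\in\mathcal{B}$ of positive measure such that $\{r\in S:\mu(C\cap T_r^{-1}[C])>0\}\subseteq A$. For $s^{-1}A$, I would define $T^{(s)}_r:=T_{s\cdot r}$ for $r\in S$. Each $T^{(s)}_r$ is measure preserving because $T_{s\cdot r}$ is, and the left distributive law gives
\[
T^{(s)}_{r_1+r_2}=T_{s\cdot(r_1+r_2)}=T_{s\cdot r_1+s\cdot r_2}=T_{s\cdot r_1}\circ T_{s\cdot r_2}=T^{(s)}_{r_1}\circ T^{(s)}_{r_2},
\]
so $\langle T^{(s)}_r\rangle_{r\in S}$ is a measure preserving action of $(S,+)$ on $(X,\mathcal{B},\mu)$. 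If $\mu(C\cap (T^{(s)}_r)^{-1}[C])>0$, then $s\cdot r\in A$, i.e.\ $r\in s^{-1}A$; hence $s^{-1}A$ is a dynamical $IP^{\star}$-set. A symmetric construction with $T^{(s)}_r:=T_{r\cdot s}$ and the right distributive law handles $As^{-1}$.

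For the two-sided translate $s^{-1}At^{-1}$, I would set $\widetilde T_r:=T_{s\cdot r\cdot t}$ and apply both distributive laws in succession:
\[
s\cdot(r_1+r_2)\cdot t=(s\cdot r_1+s\cdot r_2)\cdot t=s\cdot r_1\cdot t+s\cdot r_2\cdot t,
\]
which yields $\widetilde T_{r_1+r_2}=\widetilde T_{r_1}\circ\widetilde T_{r_2}$. The same reasoning as before identifies the dynamical $IP^{\star}$-set associated with $\langle\widetilde T_r\rangle_{r\in S}$ and $C$ as a subset of $s^{-1}At^{-1}$.

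There is essentially no technical obstacle in this argument: the entire content is verifying that the reindexed family $\langle T_{s\cdot r\cdot t}\rangle_{r\in S}$ remains a homomorphism from $(S,+)$ into the group of measure preserving transformations of $X$, which is exactly what the distributive laws of the weak ring deliver. The two-sided case is the only place in the proof where both distributive laws are needed simultaneously; otherwise each of the three claims is verified in a single line after the right action has been written down.
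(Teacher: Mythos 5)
Your proposal is correct and follows essentially the same route as the paper: the paper likewise defines $R_u := T_{sut}$, uses both distributive laws to check $R_u\circ R_v = R_{u+v}$, and reads off $\{u:\mu(C\cap R_u^{-1}[C])>0\}\subseteq s^{-1}At^{-1}$, treating the one-sided cases as analogous. (Only a cosmetic quibble: the $T_r$ form a semigroup, not necessarily a group, of measure preserving transformations.)
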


\begin{proof}
 Pick a measure preserving system $\left(X,\mathcal{B},\mu,\langle T_{t}\rangle_{t\in S}\right)$
and $C\in\mathcal{B}$ such that $\mu (C)> 0$ and $D=\left\{ v\in S:\mu\left(C\cap T_{v}^{-1}[C]\right)>0\right\} \subseteq A$. We will do the proof for $s^{-1}At^{-1}$. 

Let $s,t\in S$ be given and define for $u\in S,$ $R_u:X\rightarrow X $ by $R_u=T_{sut}.$ 
Then given $u,v \in S $, $ R_u \circ R_v=T_{sut} \circ T_{svt}=T_{sut+svt}=T_{s(u+v)t}=R_{u+v}.$ Therefore $\left(X,\mathcal{B},\mu,\langle R_{u}\rangle_{u\in S}\right)$ is a measure preserving system. It suffices to show that $\Big\lbrace u\in S: \mu (C\cap R_u^{-1}[C])> 0\Big\rbrace \subseteq s^{-1}At^{-1}.$ So assume that $u\in S$ and $\mu (C\cap R_u^{-1}[C])> 0$. Then $\mu (C\cap T_{sut}^{-1}[C])> 0$, so $sut\in D\subseteq A$ so $u\in s^{-1}At^{-1}.$
\end{proof}

\begin{lem} \label{newlem}
Let $\left(S,+\right)$ be a semigroup and let $A$ be an IP$^{\star}$-
set in $S$. Let $l\in\mathbb{N}$ and for $i\in\left\{ 1,2,\ldots,l\right\} $,
let $\langle x_{i,n}\rangle_{n=1}^{\infty}$ be a sequence in $S$.
There exists a sequence $\langle H_{n}\rangle_{n=1}^{\infty}$ in
$\mathcal{\mathcal{P}}_{f}\left(\mathbb{N}\right)$ such that for
each $n\in\mathbb{N}$, $\max H_{n}<\min H_{n+1}$ and, if for each
$n\in\mathbb{N}$ and $i\in\left\{ 1,2,\ldots,l\right\} $, $y_{i,n}=\sum_{t\in H_{n}}x_{i,t}$,
then for each $i\in\left\{ 1,2,\ldots,l\right\} $, $FS\left(\langle y_{i,n}\rangle_{n=1}^{\infty}\right)\subseteq A$.
In particular, there exists $H\in\mathcal{\mathcal{P}}_{f}\left(\mathbb{N}\right)$
such that for each $i\in\left\{ 1,2,\ldots,l\right\} $, $\sum_{t\in H_{n}}x_{i,t}\in A$.
\end{lem}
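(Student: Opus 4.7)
The plan is to lift the problem from the base semigroup to the $l$-fold product semigroup $(S^l,+)$ with coordinatewise addition, and there invoke the standard sum-subsystem strengthening of IP$^{\star}$. Throughout I will use the characterization that $A\subseteq S$ is an IP$^{\star}$-set if and only if $A$ lies in every idempotent ultrafilter of $(\beta S,+)$, and the (well-known) consequence that for an IP$^{\star}$-set $A$ and any sequence $\langle z_{n}\rangle$ in $S$ there is a sum subsystem $\langle w_{n}\rangle$ of $\langle z_{n}\rangle$ with $FS(\langle w_{n}\rangle_{n=1}^{\infty})\subseteq A$ (this follows by choosing an idempotent in the closed subsemigroup $\bigcap_{n}\overline{FS(\langle z_{k}\rangle_{k=n}^{\infty})}$ of $\beta S$ and running the usual $A^{\star}$-selection argument).

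The first step is to bundle the $l$ sequences into one. Set $\mathbf{x}_{n}=(x_{1,n},x_{2,n},\ldots,x_{l,n})\in S^{l}$. The key claim is that the diagonal power $A^{l}=A\times A\times\cdots\times A$ is an IP$^{\star}$-set in $(S^{l},+)$. To see this, let $\pi_{i}\colon S^{l}\to S$ denote the $i$-th coordinate projection; each $\pi_{i}$ is a semigroup homomorphism and so extends to a continuous semigroup homomorphism $\widetilde{\pi_{i}}\colon\beta(S^{l})\to\beta S$. If $p\in\beta(S^{l})$ is any idempotent, then $\widetilde{\pi_{i}}(p)$ is an idempotent of $(\beta S,+)$ for each $i$, so $A\in\widetilde{\pi_{i}}(p)$ and hence $\pi_{i}^{-1}[A]\in p$. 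Intersecting over $i=1,\ldots,l$ gives $A^{l}=\bigcap_{i=1}^{l}\pi_{i}^{-1}[A]\in p$, which proves the claim.

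The second step is to apply the sum-subsystem form of IP$^{\star}$ in the product semigroup to the sequence $\langle\mathbf{x}_{n}\rangle_{n=1}^{\infty}$ and the IP$^{\star}$-set $A^{l}$. This produces a sequence $\langle H_{n}\rangle_{n=1}^{\infty}$ in $\mathcal{P}_{f}(\mathbb{N})$ with $\max H_{n}<\min H_{n+1}$ such that, setting $\mathbf{y}_{n}=\sum_{t\in H_{n}}\mathbf{x}_{t}$, we have $FS(\langle\mathbf{y}_{n}\rangle_{n=1}^{\infty})\subseteq A^{l}$. Because addition in $S^{l}$ is coordinatewise, $\mathbf{y}_{n}=(y_{1,n},\ldots,y_{l,n})$ with $y_{i,n}=\sum_{t\in H_{n}}x_{i,t}$, and $FS(\langle\mathbf{y}_{n}\rangle_{n=1}^{\infty})\subseteq A^{l}$ is precisely the assertion that $FS(\langle y_{i,n}\rangle_{n=1}^{\infty})\subseteq A$ for every $i\in\{1,\ldots,l\}$. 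The final ``in particular'' statement then follows immediately by taking $H:=H_{1}$.

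The main conceptual obstacle is the simultaneous coordination across the $l$ sequences: a naive iteration of Hindman's theorem one sequence at a time keeps the first coordinate in $A$ but destroys the block structure for later coordinates, or vice versa. The product-semigroup/idempotent argument bypasses this entirely, because the single claim ``$A^{l}$ is IP$^{\star}$ in $\beta(S^{l})$'' packages all $l$ coordinate demands into one IP$^{\star}$ application and hands back one common block sequence $\langle H_{n}\rangle$. Everything after that is unpacking.
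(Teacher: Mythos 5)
Your proof is correct, but it takes a genuinely different route from the paper's. The paper proves the lemma by induction on $l$: the inductive step takes the block sequence $\langle K_{n}\rangle$ supplied by the case $l-1$, feeds the condensed sequence $z_{l,n}=\sum_{t\in K_{n}}x_{l,t}$ back into the one-sequence theorem to get blocks $\langle M_{n}\rangle$, and then refines to $H_{n}=\bigcup_{j\in M_{n}}K_{j}$, using that a sum subsystem of a sum subsystem is again a sum subsystem of the original. You instead pass to the product semigroup $(S^{l},+)$, prove that $A^{l}$ is IP$^{\star}$ there (via the fact that the continuous extension of each coordinate projection to $\beta(S^{l})$ is a homomorphism, hence carries idempotents to idempotents), and apply the one-sequence block theorem exactly once. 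Both arguments rest on the same engine (an idempotent in $\bigcap_{m}\overline{FS(\langle z_{n}\rangle_{n=m}^{\infty})}$ plus the standard $A^{\star}$-selection), so neither is circular. Your version is shorter, avoids the nested-block bookkeeping, and generalizes for free to $l$ different IP$^{\star}$-sets $A_{1},\ldots,A_{l}$ (one per coordinate), at the cost of invoking the homomorphism-extension fact (Corollary 4.22 in Hindman--Strauss); the paper's induction uses only the one-dimensional theorem as a black box and exhibits the blocks $H_{n}$ explicitly as unions of the previously chosen ones. One cosmetic point: the ``in particular'' clause has a typo in the statement ($\sum_{t\in H_{n}}$ should read $\sum_{t\in H}$), and your choice $H=H_{1}$ settles it correctly.
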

\begin{proof}
We proceed by induction on $l$. Assume first that $l=1$.
By \cite [Lemma 5.11]{key-5}, there exists an idempotent $p\in\beta S$
such that for each $m\in\mathbb{N}$, $FS\left(\langle x_{1,n}\rangle_{n=m}^{\infty}\right)\in p$.
Since $A$ is an IP$^{\star}$-set, $A\in p$. Then by  \cite[Theorem
5.14]{key-5}, there exists a sequence $\langle H_{n}\rangle_{n=1}^{\infty}$
in $\mathcal{\mathcal{P}}_{f}\left(\mathbb{N}\right)$ such that $\max H_{n}<\min H_{n+1}$
for each $n$ and, if $y_{1,n}=\sum_{t\in H_{n}}x_{1,t}$ then $FS\left(\langle y_{1,n}\rangle_{n=1}^{\infty}\right)\subseteq A$.

Now assume that $l>1$ and the statement is true for $l-1$.
Pick a sequence $\langle K_{n}\rangle_{n=1}^{\infty}$ in $\mathcal{\mathcal{P}}_{f}\left(\mathbb{N}\right)$
such that $\max K_{n}<\min K_{n+1}$ for each $n\in\mathbb{N}$ and
for each $i\in\left\{ 1,2,\ldots,l-1\right\} $, letting $z_{i,n}=\sum_{t\in K_{n}}x_{i,t}$
for each $n\in\mathbb{N}$, we have $FS\left(\langle z_{i,n}\rangle_{n=1}^{\infty}\right)\subseteq A$.
For each $n\in\mathbb{N}$, let $z_{l,n}=\sum_{t\in K_{n}}x_{l,t}$.
Using {[}5, Lemma 5.11 and Theorem 5.14{]} again, pick a sequence
$\langle M_{n}\rangle_{n=1}^{\infty}$ in $\mathcal{\mathcal{P}}_{f}\left(\mathbb{N}\right)$
such that $\max M_{n}<\min M_{n+1}$ for each $n$ and, if $y_{l,n}=\sum_{j\in M_{n}}z_{l,j}$,
then $FS\left(\langle y_{l,n}\rangle_{n=1}^{\infty}\right)\subseteq A$.

For each $n\in\mathbb{N}$, let $H_{n}=\bigcup_{j\in M_{n}}K_{j}$.
Note that for each $n$, $y_{l,n}=\sum_{j\in M_{n}}z_{l,j}=\sum_{j\in M_{n}}\sum_{t\in K_{j}}x_{l,t}=\sum_{t\in H_{n}}x_{l,t}$.
For $i\in\left\{ 1,2,\ldots,l-1\right\} $ and $n\in\mathbb{N}$,
let $y_{i,n}=\sum_{t\in H_{n}}x_{i,t}$. Then $y_{i,n}=\sum_{j\in M_{n}}z_{i,j}$
so $FS\left(\langle y_{i,n}\rangle_{n=1}^{\infty}\right)\subseteq FS\left(\langle z_{i,j}\rangle_{j=1}^{\infty}\right)\subseteq A$.
\end{proof}

We are thankful to the referee for his help to strengthen the following theorem  from our previous draft.
\begin{thm}
Let $\left(S,+,\cdot\right)$ be a weak ring and let $A$ be a dynamical
IP$^{\star}$- set in $\left(S,+\right)$. Let $l\in\mathbb{N}$ and
for $i\in\left\{ 1,2,\ldots,l\right\} $, let $\langle x_{i,n}\rangle_{n=1}^{\infty}$
be a sequence in $S$. There exists a sequence $\langle H_{n}\rangle_{n=1}^{\infty}$
in $\mathcal{\mathcal{P}}_{f}\left(\mathbb{N}\right)$ such that for
each $n\in\mathbb{N}$, $\max H_{n}<\min H_{n+1}$ and, if for each
$n\in\mathbb{N}$ and $i\in\left\{ 1,2,\ldots,l\right\} $, $y_{i,n}=\sum_{t\in H_{n}}x_{i,t}$,
$i\in\left\{ 1,2,\ldots,l\right\} $, $ZFS\left(\langle\langle y_{i,n}\rangle_{n=1}^{\infty}\rangle_{i=1}^{l}\right)\cup ZFP\left(\langle\langle y_{i,n}\rangle_{n=1}^{\infty}\rangle_{i=1}^{l}\right)\subseteq A$.
\end{thm}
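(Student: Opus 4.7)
The plan is to build the blocks $H_n$ inductively, forcing every zigzag sum and every zigzag product formed after each stage to land in $A$. First I apply Lemma \ref{Lemma 1} to $(S,+)$ and the dynamical IP$^\star$-set $A$ to obtain a dynamical IP$^\star$-subset $A^\star\subseteq A$ with the property that for every $t\in A^\star$ the set $\{u\in S:t+u\in A^\star\}$ is again a dynamical IP$^\star$-set in $(S,+)$.

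Next I construct $H_1<H_2<\cdots$ and $y_{i,n}=\sum_{t\in H_n}x_{i,t}$ by induction on $n$, maintaining the invariants
\[
ZFS\bigl(\langle\langle y_{i,j}\rangle_{j=1}^{n}\rangle_{i=1}^{l}\bigr)\subseteq A^\star \quad\text{and}\quad ZFP\bigl(\langle\langle y_{i,j}\rangle_{j=1}^{n}\rangle_{i=1}^{l}\bigr)\subseteq A.
\]
The base case $n=1$ follows from the ``in particular'' clause of Lemma \ref{newlem} applied to the IP$^\star$-set $A^\star$, which supplies an $H_1$ with $y_{i,1}\in A^\star$ for each $i$. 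For the inductive step, writing $\mathcal{S}_n=ZFS(\langle\langle y_{i,j}\rangle_{j=1}^{n}\rangle_{i=1}^{l})$ and $\mathcal{P}_n=ZFP(\langle\langle y_{i,j}\rangle_{j=1}^{n}\rangle_{i=1}^{l})$, I define
\[
B_n=A^\star\cap\bigcap_{s\in\mathcal{S}_n}\{u\in S:s+u\in A^\star\}\cap\bigcap_{p\in\mathcal{P}_n}\{u\in S:p\cdot u\in A\}.
\]
By the invariant each $s\in\mathcal{S}_n$ lies in $A^\star$, so the additive preimages are dynamical IP$^\star$-sets by the defining property of $A^\star$; the multiplicative preimages are dynamical IP$^\star$-sets by the preceding weak-ring preimage lemma; and iterating Lemma \ref{lemma 2} shows $B_n$ is itself a dynamical IP$^\star$-set, hence an IP$^\star$-set. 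Applying Lemma \ref{newlem} to the shifted sequences $\langle x_{i,k}\rangle_{k=\max H_n+1}^{\infty}$ with the IP$^\star$-set $B_n$ yields $H_{n+1}\subseteq\{\max H_n+1,\max H_n+2,\dots\}$ such that $y_{i,n+1}=\sum_{t\in H_{n+1}}x_{i,t}\in B_n$ for every $i$.

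Verifying the invariants at stage $n+1$ is then straightforward: a new zigzag sum on $F\subseteq\{1,\dots,n+1\}$ with $n+1\in F$ has the form $s+y_{\gamma(n+1),n+1}$ where $s$ is either empty or lies in $\mathcal{S}_n$, and the membership $y_{\gamma(n+1),n+1}\in B_n$ forces this sum into $A^\star$; because $n+1$ exceeds every earlier index, any new zigzag product is similarly of the form $p\cdot y_{\gamma(n+1),n+1}$ with $p$ empty or in $\mathcal{P}_n$, placing it inside $A$. Taking unions over $n$ then delivers the conclusion. The main subtlety I expect is precisely the choice of $A^\star$: Lemma \ref{Lemma 1} only delivers its preimage property at points of that specific set, so the induction must steer every zigzag sum into $A^\star$ rather than merely into $A$, which is why the first invariant is strengthened to ``$\mathcal{S}_n\subseteq A^\star$''.
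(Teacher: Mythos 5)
Your proof is correct and follows essentially the same inductive scheme as the paper's: pass to a set $C=A^{\star}\subseteq A$ via Lemma \ref{Lemma 1}, at each stage intersect the finitely many additive shifts and multiplicative quotients (each a dynamical IP$^{\star}$-set by Lemma \ref{Lemma 1}, the weak-ring quotient lemma, and Lemma \ref{lemma 2}), and apply Lemma \ref{newlem} to the tails of the sequences to choose the next block. The only divergence is that the paper actually verifies the stronger containment $ZAP\subseteq C$ (products in arbitrary order), which is why its auxiliary set $D$ also includes the right quotients $Cs^{-1}$ and two-sided quotients $s^{-1}Cu^{-1}$; since the stated theorem only claims $ZFP$ (products in increasing order of indices, so the new factor always sits at the right end), your left-quotients-only version suffices.
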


\begin{proof}
By Lemma 2.1 we may pick a dynamical IP$^{\star}$-set $C\subseteq A$
such that for each $s\in C,$ $-s+C$ is dynamical IP$^{\star}$-set.

We construct $\langle H_{n}\rangle_{n=1}^{\infty}$ by induction so
that if $n>1$, then $\max H_{n}>\min H_{n-1}$ and, letting $y_{i,n}=\sum_{t\in H_{n}}x_{i,t}$,

$$ZFS\left(\langle\langle y_{i,t}\rangle_{t=1}^{n}\rangle_{i=1}^{l}\right) \cup ZFP\left(\langle\langle y_{i,t}\rangle_{t=1}^{n}\rangle_{i=1}^{l}\right)\subseteq C.$$

Pick by Lemma \ref{newlem}, $H_{1}\in\mathcal{\mathcal{P}}_{f}\left(\mathbb{N}\right)$
such that for each $i\in\left\{ 1,2,\ldots,l\right\} $, $y_{i,1}=\sum_{t\in H_{1}}x_{i,t}$,
then $y_{i,1}\in C$. Then 
\[
ZFS\left(\langle\langle y_{i,t}\rangle_{t=1}^{1}\rangle_{i=1}^{l}\right)\cup ZFP\left(\langle\langle y_{i,t}\rangle_{t=1}^{1}\rangle_{i=1}^{l}\right)=\left\{ y_{1,1},y_{2,1},\ldots,y_{l,1}\right\} \subseteq C
\]

Now let $n\in\mathbb{N}$ and assume that we have chosen $\langle H_{s}\rangle_{s=1}^{\infty}$
in $\mathcal{\mathcal{P}}_{f}\left(\mathbb{N}\right)$ such that $\min H_{n}>\max H_{n-1}$
if $n>1$ and $$B=ZFS\left(\langle\langle y_{i,s}\rangle_{s=1}^{n}\rangle_{i=1}^{l}\right)\cup ZFP\left(\langle\langle y_{i,s}\rangle_{s=1}^{n}\rangle_{i=1}^{l}\right)\subseteq C,$$
where for $s\in\left\{ 1,2,\ldots,n\right\} $ and $i\in\left\{ 1,2,\ldots,l\right\} $,
$y_{i,s}=\sum_{t\in H_{s}}x_{i,t}$ Let
\[
D=C\cap\bigcap_{s\in B}\left(-s+C\right)\cap\bigcap_{s\in B}s^{-1}C\cap\bigcap_{s\in B}Cs^{-1}\cap\bigcap_{s\in B}\bigcap_{u\in B}\left(s^{-1}Cu^{-1}\right).
\]

\noindent By Lemmas 2.2 and 2.8, $D$ is a dynamical IP$^{\star}$-set.

Let $p=\max H_{n}$. By Lemma \ref{newlem} applied to the sequences
$\langle x_{i,t}\rangle_{t=p+1}^{\infty}$ for $i\in\left\{ 1,2,\ldots,l\right\} $,
pick $H_{n+1}\in\mathcal{\mathcal{P}}_{f}\left(\mathbb{N}\right)$
with $\min H_{n+1}>p$ such that for each $i\in\left\{ 1,2,\ldots,l\right\} $,
$y_{i,n+1}\in D$ where $y_{i,n+1}=\sum_{t\in H_{n+1}}x_{i,t}$. We
need to show that $ZFS\left(\langle\langle y_{i,s}\rangle_{t=1}^{n+1}\rangle_{i=1}^{l}\right)\cup ZFP\left(\langle\langle y_{i,t}\rangle_{t=1}^{n+1}\rangle_{i=1}^{l}\right)\subseteq C$.

To see that $ZFS\left(\langle\langle y_{i,s}\rangle_{s=1}^{n+1}\rangle_{i=1}^{l}\right)\subseteq C$,
let $\emptyset\neq F\subseteq\left\{ 1,2,\ldots,n+1\right\} $, let
$\gamma:F\rightarrow\left\{ 1,2,\ldots,l\right\} $, and let $z=\sum_{t\in F}x_{\gamma\left(t\right),t}$.
If $n+1\notin F$, then $z\in C$ by assumption. If $F=\left\{ n+1\right\} $,
then $z=y_{\gamma\left(n+1\right),n+1}\in D\subseteq C$. So assume
that $n+1\in F$ and $G=F\setminus\left\{ n+1\right\} \neq\emptyset$.
Let $s=\sum_{t\in G}y_{\gamma\left(t\right),t}$. By assumption $s\in B$
so $y_{\gamma\left(n+1\right),n+1}\in-s+C$ so that $z\in C$.

 To see that $ZAP\left(\langle\langle y_{i,s}\rangle_{s=1}^{n+1}\rangle_{i=1}^{l}\right)\subseteq C$,
let us assume  $r\in\left\{ 1,2,\ldots,n+1\right\} $, let
$\tau:\left\lbrace 1,2,\ldots ,r\right\} \overset{1-1}{\longrightarrow}\left\{ 1,2,\ldots,n+1\right\rbrace $,
let $\gamma:\left\{ 1,2,\ldots,r\right\} \rightarrow\left\{ 1,2,\ldots,l\right\} $,
and let $z=\prod_{t=1}^{r}y_{\gamma\left(t\right),\tau\left(t\right)}$.
If $n+1\notin\tau\left[\left\{ 1,2,\ldots,r\right\} \right]$, then
$z\in C$ by assumption so assume we have $m\in\left\{ 1,2,\ldots,r\right\} $
such that $n+1=\tau\left(m\right)$. If $r=1$, then $m=1$ so that
$z=y_{\gamma\left(1\right),n+1}\in C$. So assume that $r>1$. If
$m=r,$ then $s=\prod_{t=1}^{r-1}y_{\gamma\left(t\right),\tau\left(t\right)}\in B$
so $y_{\gamma\left(r\right),n+1}\in s^{-1}C$ so $z\in C$.

If $m=1$, define 
$\eta:\left\{ 1,2,\ldots,r-1\right\}\overset{1-1}{\longrightarrow}\left\{ 1,2,\ldots,n\right\}$
and $\nu:\left\{ 1,2,\ldots,r-1\right\}$ $\rightarrow\left\{ 1,2,\ldots,l\right\}$
by $\eta\left(t\right)=\tau\left(t+1\right)$ and $\nu\left(t\right)=\gamma\left(t+1\right)$.
Then $s=\prod_{t=1}^{r-1}y_{\nu\left(t\right),\tau\left(t\right)}$ $\in B$
so $y_{\gamma\left(1\right),n+1}\in Cs^{-1}$ and thus $z=y_{\gamma\left(1\right),n+1}s\in C$.
Finally, assume $1<m<r$. Define $\eta:\left\{ 1,2,\ldots,r-m\right\} \overset{1-1}{\longrightarrow}\left\{ 1,2,\ldots,n\right\} $ and $\nu:\left\{ 1,2,\ldots,r-m\right\} \longrightarrow\left\{ 1,2,\ldots,l\right\} $
by $\eta\left(t\right)=\tau\left(m+t\right)$ and $\nu\left(t\right)=\gamma\left(m+t\right)$.
Then $s=\prod_{t=1}^{m-1}y_{\gamma\left(t\right),\tau\left(t\right)} \linebreak\in B$
and $u=\prod_{t=1}^{r-m}y_{\nu\left(t\right),\eta\left(t\right)}\in B$
so $z=sy_{\gamma\left(m\right),n+1}u\in C$.
\end{proof}

\vspace{0.2in}

\noindent \textbf{Acknowledgment: } The second author acknowledges the grant UGC-NET
SRF fellowship with id no. 421333 of the CSIR-UGC NET December 2016. We acknowledge the anonymous referee for his helpful  comments to improve the article.

\vspace{0.2in}

\end{document}